\documentclass[11pt,a4paper]{article}

\usepackage{graphicx}
\usepackage{amssymb,amsmath,amsthm}
\usepackage{pdfsync}
\usepackage[latin1]{inputenc}
\usepackage{enumerate}
\usepackage{amsfonts}
\usepackage{amssymb}
\usepackage{hyperref}


 \newtheorem{thm}{Theorem}[section]
 \newtheorem{cor}[thm]{Corollary}

 \theoremstyle{definition}
 
 \theoremstyle{remark}
 \newtheorem{rem}[]{Remark}
 
 \numberwithin{equation}{section}


\def\N{\mathbb N}

\def\R{\mathbb R}


\begin{document}

\title{On some relations between the  Perimeter, the Area and the Visual Angle of a Convex Set}

\author{J.Bruna, J.Cufí and  A.Reventós}

\date{}


%

\maketitle

\begin{abstract}We establish some relations between the perimeter, the area  and the visual angle of a planar compact convex set. Our first result  states that Crofton's formula is the unique universal formula relating the visual angle, length and area. After that  we give a characterization of convex sets of constant width by means of the behaviour of its isotopic sets at infinity. Also for this class of convex sets we prove that the existence of an isotopic circle is enough to ensure that the considered set is a disc. 
\end{abstract}

\section{Introduction}
In this paper we establish some relations between geometric quantities associated to a planar compact convex 
set $K$ with boundary of class ${\cal C}^{2}$. More precisely we consider the perimeter $L$, the area $F$ of $K$ and the visual angle $w=w(P)$ of $K$ from a point $P\notin K$.

The starting point is the classical Crofton's formula 

\begin{eqnarray}\label{0103}\int_{P\notin K}(w-\sin w)dP=\frac{L^{2}}{2}-\pi F.\end{eqnarray} 
This equality easily follows from standard arguments of Integral Geometry, see for instance \cite{santalo}. Another approach to \eqref{0103} is given in \cite{CGR}. The natural question arises whether separate formulas for $L^2, F$ alone exist, or equivalently, whether replacing $w-\sin w$ by some other function $f(w)$ one gets a different linear combination of $L^2,F$. For specific domains this is indeed possible, for instance when $K$ is a disc  one has several formulas of this type, like 

$$L^{2}=\int_{P\notin K}\frac{4}{3}\sin^{3}w\,dP,\qquad 4\pi F=\int_{P\notin K}\frac{\sin^{3}w}{\cos^{2}(w/2)}\,dP,$$
see \cite{santalo}, p.59.

Our first result (Theorem \ref{2102}) states, under suitable conditions,  that if a function $f$ satisfies 
 $$\int_{P\notin K}f(\omega(P))dP=aL^{2}+bF$$ for each compact convex set $K$, with $a,b\in\R$, two constants that do not depend on $K$, 
then $$f(w)=\lambda(w-\sin w),$$
for some constant $\lambda\in\R$. A consequence of this result is that no universal formula exists giving say the area $F$ in terms of the visual angle alone. We believe, though, that a  formula like
$$F=\int_{P\notin K} f(\omega(P))g(P)\, dP,$$
with $f,g$  universal functions not depending on $K$ might exist.

\medskip

By the result in \cite{Kurusa}, $K$ is completely determined by the visual angle $w(P)$ outside a big ball containing $K$. It is then natural to ask how specific properties of $K$ can be read from this knowledge. 

In our next results, we do so in terms of the assymptotic behaviour of $w(P)$ as $P$ goes to infinity. Now,  it is easy to see that, denoting by  $w(R,\theta)$  the visual angle of $K$ from the point of polar coordinates $(R,\theta)$, the quantity $R\,w(R,\theta)$ remains bounded and

 \begin{equation}\label{aa}
 \lim_{R\to\infty} R\,w(R,\theta)=a(\theta),
 \end{equation}
  where $a(\theta)$ is the width of $K$ in the direction $\theta$, meaning that the knowledge of the asymptotic behaviour of $w$ at infinity amounts to know the width function. Whence, assymptotic statements can just involve quantities that depend only on the width function. For instance, the perimeter of the convex set, that is related to the width by the formula 
  $2L=\int_{0}^{2\pi}a(\theta)\,d\theta,$ 
can be known 
  from the behaviour of the visual angle at infinity 
by 
 \begin{equation}\label{ab}
 2L=\lim_{R\to\infty}R\int_{0}^{2\pi}w(R,\theta)\,d\theta,
 \end{equation}
 see Theorem \ref{2202} and Remark \ref{1903}.
 
 For central symmetric convex sets the area is determined by the width, and this enable us to obtain the following formula 
  in the spirit of \eqref{ab},
\begin{equation}\label{ac} 
\lim_{R\to\infty}\int_{0}^{2\pi}R^{2}[w(R,\theta)^2-w_{\theta}(R,\theta)^2]\,d\theta= 8F,
\end{equation}
where $w_{\theta}$ means partial derivative of $w$ with respect to $\theta$ (see   Corollary \ref{0103c}  and Remark \ref{1903}).

%
%
%
%
%
%

 \medskip
 After that we deal with compact convex sets of constant width for which we obtain two results. To state the first one we note that formula \eqref{aa} says that the convex set  $K$ has constant width $a$ if and only if the visual angle  $w$ behaves like $ a/R$ at infinity, which in turn roughly says that the isotopic sets $C(\alpha)=\{P:w(P)=\alpha\}$ behave like the circles of radius $R=\frac {a}{\alpha}$. Our result provides a precise quantitative formulation of this fact by establishing that $C(\alpha)$ tends to a circle in the sense that it tends to satisfy the equality in the isoperimetric inequality. More precisely, if $L(\alpha), F(\alpha)$ denote respectively the length and area of $C(\alpha)$, one has

   $$\lim_{\alpha\to 0}\frac{L(\alpha)^{2}}{4\pi F_{\alpha}}=1,$$
if and only if $K$ has constant width, see Theorem \ref{0102b}.
 
The second result deals with isotopic sets that are actually circles, not just assymtotically. Green's  (\cite{green}) proved  that if $K$ has an isotopic circle  $C_{\alpha}$, 
with $\alpha$ an irrational multiple of $\pi$, or $\pi-\alpha=(m/n)\pi$, $m$ even, $m$ and $n$ relatively prime, then $K$ is a disc. In general $K$ can have an isotopic circle without being a disc. Later on Nitsche \cite{Nitsche} proved that if $K$ has two concentric isotopic circles, then $K$ is a disc. We prove in Theorem \ref{0403} that if $K$ has  constant width and just one  isotopic circle, then it is a disc.

 \medskip
 At the end we give the relationship between  the area $F$ and the perimeter $L$  of a convex set and the radius of an isotopic circle, see \eqref{0403f} and \eqref{0403e}. As a consequence we obtain the inequalities 
 $$F\leq \pi R^{2} \sin^{2}(\frac{\alpha}{2}),\quad L\leq 2\pi R\sin (\frac{\alpha}{2}),$$
where  $R$ is the radius of the isotopic circle and $\alpha$ the constant visual angle on this circle.

 \section{On Crofton's Function}
As we have said in the Introduction, the Crofton's formula \eqref{0103} is proven with standard arguments of integral geometry, but it is also a direct consequence of the general formula for integrating functions of the visual angle given in \cite{CGR}. Concretely, equation (16) in \cite{CGR}, says that for a differentiable function $f:[0,\pi]\rightarrow \R$, satisfying  $f(w)=O(w^{3}),$ $w\to 0$,  one has
\begin{eqnarray}\label{set23x}
\int_{P\notin K}f(\omega)dP&=&
-f(\pi)F+ \frac{L^{2}}{2\pi}M(f)\nonumber\\ \notag
&+& 
\pi\sum_{k\geq 2, even}c_{k}^{2}\left(M(f) 
+ 2\sum_{j=1, odd}^{k-1}\alpha_{j} \right)
\\ 
&+& 
\pi\sum_{k\geq 3, odd}c_{k}^{2}\left( -2\sum_{j=2, even}^{k-1}\alpha_{j} \right),
\end{eqnarray}
with  $$\alpha_{j}=\int_{0}^{\pi}f'(\omega)j\cos(j\omega) d\omega ,\qquad M(f)=\int_{0}^{\pi}\frac{f'(\omega)}{1-\cos\omega}d\omega,$$ 
and $c_{k}^{2}=a_{k}^{2}+b_{k}^{2}$, where $a_{k},b_{k}$ are the Fourier coeficients of the support function  $p(\varphi)$ of the  compact convex set $K$. Notice that up to a constant, $\alpha_j$ is the $j$-th Fourier coefficient of $f'$ in the basis $\{\cos jw\}$.  The above  equality for $f(w)=w-\sin w$ gives immediately Crofton's formula.

\medskip

We shall prove now that the function $w-\sin w$  
 is the only one 
 that can provide a Crofton's type formula.

 \medskip

 \begin{thm}\label{2102}Let $f:[0,\pi]\longrightarrow \R$ be a differentiable function with $f(w)=O(w^{3}),$ $w\to 0$, such that for every compact convex set $K$, 
\begin{eqnarray}\label{1701}
\int_{P\notin K}f(\omega(P))dP=aL^{2}+bF, 
\end{eqnarray}
being $a,b\in\R$ some constants not depending on $K$,  and $w(P)$  the visual angle of $K$ from the point $P$. Then $f$ 
is, up to a constant factor $\lambda$,  the Crofton function $f(\omega)=\omega-\sin\omega$. In this case $a=\lambda/2$, $b=-\pi\lambda$.
\end{thm}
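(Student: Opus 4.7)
The plan is to apply the master identity \eqref{set23x} to $f$, set the result equal to $aL^{2}+bF$, and compare coefficients as $K$ varies over all compact convex sets. Writing the support function as $p(\varphi)=a_{0}/2+\sum_{k\geq 1}(a_{k}\cos k\varphi+b_{k}\sin k\varphi)$, one has $L=\pi a_{0}$ and $F=\frac{L^{2}}{4\pi}-\frac{\pi}{2}\sum_{k\geq 2}(k^{2}-1)c_{k}^{2}$ with $c_{k}^{2}=a_{k}^{2}+b_{k}^{2}$. Since small perturbations of a support function still satisfy $p+p''>0$, the parameter $a_{0}$ and each $c_{k}^{2}$ with $k\geq 2$ can be varied independently; concretely, starting from a disc and perturbing by $\epsilon\cos k\varphi$ isolates the $k$-th mode while keeping $L$ unchanged. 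This legitimizes matching coefficients.

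Substituting the expansion of $F$ into \eqref{1701}, comparing with \eqref{set23x}, and matching the coefficient of $L^{2}$ and of each $c_{k}^{2}$ with $k\geq 2$ yields, besides one scalar equation from $L^{2}$ (namely $2M(f)-f(\pi)=4\pi a+b$), the relations
\[
M(f)+2\sum_{\substack{j\,\text{odd}\\ 1\leq j\leq k-1}}\alpha_{j}=-\frac{(f(\pi)+b)(k^{2}-1)}{2}\qquad (k\geq 2\text{ even}),
\]
\[
\sum_{\substack{j\,\text{even}\\ 2\leq j\leq k-1}}\alpha_{j}=\frac{(f(\pi)+b)(k^{2}-1)}{4}\qquad (k\geq 3\text{ odd}).
\]
Taking successive differences in each family telescopes these into the single closed-form identity $\alpha_{j}=(-1)^{j}\,j\,c$ for every $j\geq 2$, where $c:=f(\pi)+b$.

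The crucial observation now is that by the definition of $\alpha_{j}$, the numbers $A_{j}:=\frac{2}{\pi}\int_{0}^{\pi}f'(\omega)\cos(j\omega)\,d\omega=\frac{2\alpha_{j}}{j\pi}$ are precisely the cosine Fourier coefficients of $f'$ on $[0,\pi]$. Differentiability of $f$ on the compact interval makes $f'$ integrable, so the Riemann–Lebesgue lemma forces $A_{j}\to 0$; but $A_{j}=\frac{2}{\pi}(-1)^{j}c$ has constant modulus, which forces $c=0$. Hence $f(\pi)=-b$ and $\alpha_{j}=0$ for all $j\geq 2$.

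With all higher cosine Fourier coefficients of $f'$ vanishing, the expansion of $f'$ reduces to $f'(\omega)=\frac{f(\pi)}{\pi}+A_{1}\cos\omega$ (the constant term coming from $\frac{1}{\pi}\int_{0}^{\pi}f'=f(\pi)/\pi$, since $f(0)=0$). Integrating and using $f(0)=0$ gives $f(\omega)=\frac{f(\pi)}{\pi}\omega+A_{1}\sin\omega$, and the growth condition $f(w)=O(w^{3})$ at $0$ kills the linear term, yielding $A_{1}=-f(\pi)/\pi$ and therefore $f(\omega)=\lambda(\omega-\sin\omega)$ with $\lambda=f(\pi)/\pi$. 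Reading off the constants gives $b=-\lambda\pi$ and, from the scalar equation $2M(f)-f(\pi)=4\pi a+b$, $a=\lambda/2$. The step I expect to be the main obstacle is making the independent-variation argument fully rigorous, i.e.\ justifying that coefficient matching mode by mode is legitimate; after that is granted, the telescoping identity and the Riemann–Lebesgue reduction are essentially automatic.
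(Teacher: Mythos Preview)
Your proof is correct and follows essentially the same route as the paper's. The paper makes your independent-variation step concrete by testing against the explicit one-mode family $p(\varphi)=1+t\cos(m\varphi)$ (precisely the ``disc plus $\epsilon\cos k\varphi$'' perturbations you suggest) and matching coefficients of $t^{2}$; from there the telescoping of the even/odd relations and the Riemann--Lebesgue argument forcing $f(\pi)+b=0$ and $\alpha_{j}=0$ for $j\geq 2$ are identical to yours.
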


\begin{proof}
We consider the family of convex sets given by the support functions
\begin{eqnarray}\label{2002}p(\varphi)=1+t\cos(m\varphi),\qquad 0\leq\varphi\leq 2\pi,\end{eqnarray} for $m\in\N$.
The condition of convexity $p+p''>0$ is satisfied if $0<t<\frac{1}{m^{2}-1}$.

Then, the perimeter $L$ and the area $F$ of these convex sets are
\begin{eqnarray*}
L=\int_{0}^{2\pi}p\,d\varphi=2\pi,\qquad F=\frac{1}{2}\int_{0}^{2\pi}(p^{2}-p'^{2})\,d\varphi=\pi-\frac{\pi}{2}(m^{2}-1)t^{2}.
\end{eqnarray*}

Now we combine \eqref{set23x} and \eqref{1701} to obtain, with $m$ even,

\begin{eqnarray*}a4\pi^{2}+b(\pi-\frac{\pi}{2}(m^{2}-1)t^{2})&=&-f(\pi)(\pi-\frac{\pi}{2}(m^{2}-1)t^{2})+2\pi M(f)\\&+&\pi M(f)t^{2}+2\pi (\alpha_{1}+\alpha_{3}+\dots+\alpha_{m-1})t^{2}.\end{eqnarray*}
Equalizing the coefficients of $t^{2}$ we obtain 
$$(m^{2}-1)(b+f(\pi))+2M(f)+4(\alpha_{1}+\alpha_{3}+\dots+\alpha_{m-1})=0.$$

Changing $m$ by $m+2$ it follows 
$$((m+2)^{2}-1)(b+f(\pi))+2M(f)+4(\alpha_{1}+\alpha_{3}+\dots+\alpha_{m-1}+\alpha_{m+1})=0.$$
 Substracting these two last equalities
we get for $m\geq 2$, $m$ even, 
$$b+f(\pi)=-\frac{\alpha_{m+1}}{m+1}=-\int_{0}^{\pi}f'(w)\cos((m+1)w)\,dw.$$

From Riemann-Lebesgue's Lemma it follows that 
$b+f(\pi)=0$, and $\alpha_{j}=0$, for $j$ odd, $j\geq 3$.

\medskip
As well, for  $m$ odd,  we have  from \eqref{set23x}
\begin{eqnarray*}a4\pi^{2}+b(\pi-\frac{\pi}{2}(m^{2}-1)t^{2})&=&-f(\pi)(\pi-\frac{\pi}{2}(m^{2}-1)t^{2})+2\pi M(f)\\&-&2\pi (\alpha_{2}+\alpha_{4}+\dots+\alpha_{m-1})t^{2}.\end{eqnarray*}
Equalizing the coefficients of $t^{2}$ and using that $b+f(\pi)=0$ we obtain
$$\alpha_{2}+\alpha_{4}+\dots+\alpha_{m-1}=0, \quad m\geq 3$$
and so $\alpha_{j}=0$, for $j$ even, $j\geq 2$.

%
%
%
%
%
%


Hence, 
$$f'(\omega)=a_{0}+a_{1}\cos(\omega)$$
  and 
$$f(\omega)=a_{0}\,\omega+a_{1}\sin(\omega)+ c,$$
for some constants $a_{0}, a_{1},c$.

But since  $f(w)=O(w^{3}),$ $w\to 0$, we get  $f(0)=f'(0)=0$ and so $c=0, \, a_{0}+a_{1}=0$. This proves the Theorem with $\lambda=a_{0}$. 

\end{proof}


\section{Behaviour of the visual angle at infinity}
The goal of this section is to obtain information about the convex set observing it from a point that goes to infinity.

First of all we will see that the perimeter of a  convex set can be evaluated  by integrating the visual angle on circles of increasing radius. 

The circle $C_{R}$ centered at the origin with radius $R$, can  be parametrized by means of the support function $p(\varphi)$ of $K$ in the following way. To each value of $\varphi$ one associates the point $P(R,\varphi)$ given by the intersection of $C_{R}$ with the  half-straight line, taken in the direct sense,  of slope $\varphi+\pi/2$,  and at a distance $p(\varphi)$ of the origin.


 Then we have
\begin{thm}\label{2202}Let $K$ be a compact convex set of perimeter $L$ and  denote by $w(R,\varphi)$ the visual angle of $K$ from the point  $P(R,\varphi)$. Then
$$2L=\lim_{R\to\infty}R\int_{0}^{2\pi}w(R,\varphi)\,d\varphi.$$
\end{thm}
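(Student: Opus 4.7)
The strategy is to reduce the statement to the pointwise asymptotic \eqref{aa}, combined with the Cauchy length formula $\int_0^{2\pi}a(\theta)\,d\theta=2L$, using dominated convergence to interchange limit and integral.

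First, I would analyze the parametrization. Since $P(R,\varphi)$ lies on the circle $C_R$ and on the support line $\{x\cos\varphi+y\sin\varphi=p(\varphi)\}$, writing $P(R,\varphi)=R(\cos\theta,\sin\theta)$ yields $R\cos(\theta-\varphi)=p(\varphi)$, whence, in the direct orientation,
\[
\theta(R,\varphi)=\varphi+\frac{\pi}{2}-\frac{p(\varphi)}{R}+O(R^{-3}),\qquad R\to\infty.
\]
Thus $\theta(R,\varphi)\to\varphi+\pi/2$ as $R\to\infty$. Since $\partial K$ is $\mathcal{C}^2$, the width function $a$ is continuous, and combining \eqref{aa} with this continuity gives the pointwise limit
\[
R\,w(R,\varphi)\longrightarrow a\!\left(\varphi+\tfrac{\pi}{2}\right).
\]

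Next, I would establish a uniform bound. If $K\subset B(0,R_0)$, then from any point at distance $R\geq R_0$ from the origin, the visual angle of $K$ is at most the visual angle subtended by $B(0,R_0)$, namely $2\arcsin(R_0/R)\leq \pi R_0/R$. Hence $R\,w(R,\varphi)\leq \pi R_0$ uniformly in $\varphi\in[0,2\pi)$ and $R$ large, furnishing the integrable majorant. The dominated convergence theorem, together with the $2\pi$-periodicity of $a$ and Cauchy's formula, then yields
\[
\lim_{R\to\infty}R\int_0^{2\pi}w(R,\varphi)\,d\varphi=\int_0^{2\pi}a\!\left(\varphi+\tfrac{\pi}{2}\right)d\varphi=\int_0^{2\pi}a(\theta)\,d\theta=2L.
\]

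The delicate point is the pointwise convergence along the shifted sequence of directions $\theta(R,\varphi)$ rather than a fixed $\theta$. This follows either from the uniformity in $\theta$ of \eqref{aa} on the compact parameter space $[0,2\pi]$ (a consequence of the smoothness of $K$), or from an elementary direct computation of the two tangent lines from $P(R,\varphi)$ to $K$: one of them is precisely the support line at direction $\varphi$, while the other touches $K$ near the point with outward normal $\varphi+\pi$, and expanding the tangent half-angle as a power series in $1/R$ gives exactly the claimed limit $a(\varphi+\pi/2)$.
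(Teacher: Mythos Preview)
Your argument is correct and shares the same skeleton as the paper's: establish the pointwise limit $R\,w\to\text{width}$, control it uniformly via a circumscribed disk, then pass the limit through the integral (you use dominated convergence, the paper uses uniform convergence---same ingredients). The difference is that the paper does not detour through polar coordinates and \eqref{aa}. Instead it writes down directly, in the $(R,\varphi)$ parametrization, the fundamental relation
\[
\arccos\frac{p(\varphi)}{R}+\arccos\frac{p(\varphi+\pi-w)}{R}=\pi-w,
\]
from which one obtains the algebraic identity $p^2+p_1^2+2pp_1\cos w=R^2\sin^2 w$ with $p_1=p(\varphi+\pi-w)$, and reads off $\lim_{R\to\infty}R\sin w=p(\varphi)+p(\varphi+\pi)=a(\varphi)$ with no $\pi/2$ shift and no appeal to continuity along a moving direction $\theta(R,\varphi)$. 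This direct route has the advantage that the algebraic identity is reused later (for the derivative limit in Theorem~\ref{2202m} and throughout Section~5 on isoptic circles), so it buys more than your argument does. Conversely, your approach is conceptually transparent, but note that \eqref{aa} is not proved in the paper independently of this very theorem, so invoking it is mildly circular; the ``elementary direct computation'' you sketch at the end to justify it is, in effect, exactly what the paper carries out.
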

\begin{proof}The visual angle $w=w(R,\varphi)$ verifies the fundamental relation
$$\arccos\frac{p(\varphi)}{R}+\arccos\frac{p(\varphi+\pi-w)}{R}=\pi-w(R,\varphi),\quad  0\leq\varphi\leq 2\pi,$$
for every $R>0$ such that $C_{R}$ contains $K$, where $p(\varphi)$ is the support function of $K$.

From this equation it follows
\begin{eqnarray}\label{2202c}
p^{2}+p_{1}^{2}+2pp_{1}\cos(w)=R^{2}\sin^{2}(w), 
\end{eqnarray}
where $p=p(\varphi)$, $p_{1}=p(\varphi+\pi-w)$.

Then from 
 \eqref{2202c} we have that $\lim_{R\to\infty}w(R,\varphi)=0$ and 
\begin{eqnarray}\label{2202d}\lim_{R\to\infty}R\, w(R,\varphi)=\lim_{R\to\infty}R\sin(w(R,\varphi))=a(\varphi), \quad 0\leq\varphi\leq 2\pi,\end{eqnarray} where $a(\varphi)=p(\varphi)+p(\varphi+\pi)$
is the width of $K$ in the direction $\varphi$. 

The  limit in \eqref{2202d} is uniform in $\varphi$. In fact, $w(R,\varphi)\leq \tilde{w}(R)$ where  $\tilde{w}(R)$ is the visual angle of the smallest circle centered at the origin and containing $K$ from a point at distance $R$ from the origin. So $w(R,\varphi)$ tends to zero uniformly in $\varphi$ when $R\to \infty$ and we deduce,  from \eqref{2202c} and the uniform continuity of $p(\varphi)$,  
that the convergence in \eqref{2202d} is uniform.

Then the result follows by integration in \eqref{2202d}.
\end{proof}

Motivated  by Theorem \ref{2202} we can ask if there is an analogous result involving the area of $K$. 
We can answer this question for central  symmetric compact convex sets. The basic result  is

\begin{thm}\label{2202m}Let $K$ be a compact convex set and let $w=w(R,\varphi)$ be the visual angle of $K$ at the point $P(R,\varphi)$. Denote by $a(\varphi)=p(\varphi)+p(\varphi+\pi)$, where $p(\varphi)$  is the support function of $K$, the width of $K$ in the direction $\varphi$.
Then 

$$\lim_{R\to\infty}\int_{0}^{2\pi}R^{2}(w(R,\varphi)^{2}-w_{\varphi}(R,\varphi)^{2})\,d\varphi=\int_{0}^{2\pi}(a(\varphi)^{2}-a'(\varphi)^{2})\,d\varphi,$$
where  $w_{\varphi}$ denotes the derivative with respect to  $\varphi$.
\end{thm}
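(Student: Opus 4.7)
The strategy is to reduce the claim to two uniform convergences on $[0,2\pi]$: $Rw(R,\varphi)\to a(\varphi)$, already established in Theorem \ref{2202}, and $Rw_\varphi(R,\varphi)\to a'(\varphi)$. Granted the second, $R^2(w^2-w_\varphi^2)$ converges uniformly to $a^2-(a')^2$, and the theorem follows by exchanging limit and integral over the compact interval $[0,2\pi]$.

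To obtain the derivative asymptotics I would differentiate the fundamental relation \eqref{2202c} implicitly with respect to $\varphi$. Writing $\psi=\varphi+\pi-w(R,\varphi)$, $p_1=p(\psi)$, and $p_1'=p'(\psi)$, a routine computation yields a linear relation of the form $D\cdot w_\varphi = N$, where
\begin{align*}
N &= p\,p'(\varphi)+p_1 p_1' + p_1 p'(\varphi)\cos w + p\, p_1'\cos w,\\
D &= R^{2}\sin w \cos w + p\,p_1 \sin w + p_1 p_1' + p\, p_1'\cos w.
\end{align*}
From Theorem \ref{2202} we have $w\to 0$ and $Rw\to a$ uniformly in $\varphi$. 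Since $p\in\mathcal{C}^{2}$, uniform continuity of $p$ and $p'$ then gives $p_1\to p(\varphi+\pi)$ and $p_1'\to p'(\varphi+\pi)$ uniformly. Setting $q(\varphi):=p(\varphi+\pi)$, so that $a=p+q$ and $a'=p'+q'$, one finds $N\to pp'+qq'+p'q+pq'=(p+q)(p'+q')=a\,a'$ uniformly. Using $R\sin w=\sqrt{p^{2}+p_1^{2}+2pp_1\cos w}$ from \eqref{2202c} one sees $R\sin w\to a$ uniformly, and therefore $D/R\to a$ uniformly as well.

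Because $K$ has nonempty interior, the width $a(\varphi)$ is continuous and strictly positive on $[0,2\pi]$, hence bounded below by a positive constant; thus $D/R$ stays uniformly bounded below for $R$ large, and
$$Rw_\varphi=\frac{N}{D/R}\longrightarrow\frac{a\,a'}{a}=a'$$
uniformly. Combined with $Rw\to a$ this yields uniform convergence of $R^{2}(w^{2}-w_\varphi^{2})$ to $a^{2}-(a')^{2}$ on $[0,2\pi]$, which finishes the proof after integration. The main obstacle is precisely this uniform control: the $\mathcal{C}^{2}$ hypothesis on $\partial K$ is essential to upgrade the pointwise limit of $p_1'$ to a uniform one, and the strict positivity of the width is what prevents the denominator $D/R$ from degenerating.
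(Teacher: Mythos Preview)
Your proof is correct and follows essentially the same route as the paper: differentiate the fundamental relation \eqref{2202c} implicitly in $\varphi$, read off $Rw_\varphi\to a'$ from the resulting quotient using $R\sin w\to a$ and the uniform continuity of $p,p'$, and then integrate. Your treatment is in fact slightly more careful than the paper's, since you make explicit why the denominator $D/R$ stays bounded away from zero (via the strict positivity of the width), which is exactly what is needed to pass to a uniform limit in the quotient.
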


\begin{proof}

%

We begin by proving that 
\begin{eqnarray}\label{2202h}\lim_{R\to\infty} R\,w_{\varphi}(R,\varphi)=a'(\varphi),\end{eqnarray}
uniformly on $\varphi$. In fact, differentiation of equation \eqref{2202c} with respect to $\varphi$ gives $$Rw_{\varphi}=\frac{R(pp'+p_{1}p'_{1}+(p'p_{1}+pp'_{1})\cos(w))}{R^{2}\sin(w)\cos(w)+p_{1}p'_{1}+pp'_{1}+pp_{1}}.$$

Taking limits and according \eqref{2202d} we have 

$$\lim_{R\to\infty}Rw_{\varphi}(R,\varphi)=\lim_{R\to\infty}\frac{Ra(\varphi)a'(\varphi)}{Ra(\varphi)+p'(\varphi+\pi)a(\varphi)+p(\varphi)p(\varphi+\pi)}=a'(\varphi).$$
Since the convergence in \eqref{2202d} is uniform and the functions $p(\varphi)$ and  $p'(\varphi)$ are uniformly continuous,  
the convergence in \eqref{2202h} is also uniform.

As a consequence 
$$\lim_{R\to \infty}R^{2}\int_{0}^{2\pi}(w(R,\varphi)^{2}-w_{\varphi}(R,\varphi)^{2})\,d\varphi=\int_{0}^{2\pi}(a(\varphi)^{2}-a'(\varphi)^{2})\,d\varphi,$$
as we wanted to prove. 
\end{proof}

We note that the integral in the right-hand side of the above equality is two times the area of the convex set having $a(\varphi)$ as its support function.
 
 \medskip
 As a consequence of the above Theorem we obtain for the especial case of central symmetric  convex sets
 the following result. 
 
\begin{cor}\label{0103c}Let $K$ be a compact convex set symmetric with respect to the origin and of area $F$. With the notation in  Theorem \ref{2202m} one has $$F=\frac{1}{8}\lim_{R\to\infty}\int_{0}^{2\pi}R^{2}(w(R,\varphi)^{2}-w_{\varphi}(R,\varphi)^{2})\,d\varphi.$$
\end{cor}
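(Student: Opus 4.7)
The plan is to deduce this directly from Theorem \ref{2202m} by exploiting central symmetry to relate the width function of $K$ to its support function. Since the left-hand side limit is already identified in Theorem \ref{2202m} with $\int_{0}^{2\pi}(a(\varphi)^{2}-a'(\varphi)^{2})\,d\varphi$, the only task is to evaluate this integral in terms of $F$.

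The key observation is that if $K$ is symmetric with respect to the origin, then its support function satisfies $p(\varphi+\pi)=p(\varphi)$, so
$$a(\varphi)=p(\varphi)+p(\varphi+\pi)=2p(\varphi),$$
and consequently $a'(\varphi)=2p'(\varphi)$. Substituting into the formula of Theorem \ref{2202m} gives
$$\int_{0}^{2\pi}(a(\varphi)^{2}-a'(\varphi)^{2})\,d\varphi=4\int_{0}^{2\pi}(p(\varphi)^{2}-p'(\varphi)^{2})\,d\varphi.$$
By the classical expression of the area of a convex body in terms of its support function (already used in the proof of Theorem \ref{2102}), one has $F=\tfrac{1}{2}\int_{0}^{2\pi}(p^{2}-p'^{2})\,d\varphi$, so the right-hand side equals $8F$. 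Combining with Theorem \ref{2202m} yields the claimed formula.

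There is essentially no obstacle here; the argument is a routine specialization of Theorem \ref{2202m}. The conceptual role of central symmetry is exactly what one would expect: for a general convex set the width function does not determine the area, but when $K$ is centrally symmetric the function $a(\varphi)$ is (up to the factor $2$) the support function of $K$ itself, so the area of the body having $a$ as support function (namely $2K$, with area $4F$) is directly expressible in terms of $F$, which matches the remark made after Theorem \ref{2202m}.
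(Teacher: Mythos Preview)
Your proof is correct and is essentially identical to the paper's own argument: both use central symmetry to get $p(\varphi+\pi)=p(\varphi)$, hence $a=2p$ and $a^{2}-a'^{2}=4(p^{2}-p'^{2})$, and then invoke Theorem~\ref{2202m} together with the area formula $F=\tfrac{1}{2}\int_{0}^{2\pi}(p^{2}-p'^{2})\,d\varphi$. Your additional remark interpreting the right-hand side as the area of $2K$ is a nice gloss but not a different method.
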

\begin{proof} By the central symmetry of $K$ one has
$p(\varphi)=p(\varphi+\pi)$, $0\leq\varphi\leq 2\pi$, and so 
 $a(\varphi)=2p(\varphi)$ and so $a^{2}-a'^{2}=4(p^{2}-p'^{2})$ that, integrating with respect to $\varphi$ and according Theorem \ref{2202m}, gives the result.
\end{proof}

\begin{rem}\label{1903}
If we denote by $w(R,\theta)$ the visual angle of $K$ from the point of polar coordinates $(R,\theta)$
we can consider $$\int_{0}^{2\pi}Rw(R,\theta)\,d\theta$$
which is in general diferent from
$$\int_{0}^{2\pi}Rw(R,\varphi)\,d\varphi.$$
The relation between 
$\theta$ and $\varphi$ is $\theta=\varphi+\arccos\frac{p(\varphi)}{R}$.


So one has 
\begin{eqnarray*}\int_{0}^{2\pi}R\omega(R,\theta)\,d\theta&=&\int_{0}^{2\pi}Rw(R,\varphi)(1-\frac{p'(\varphi)}{\sqrt{R^{2}-p(\varphi)^{2}}})d\varphi.
\end{eqnarray*}
As a consequence, Theorem \ref{2202}  gives 

$$2L=\lim_{R\to \infty}\int_{0}^{2\pi}Rw(R,\theta)d\theta,$$
as stated in the introduction and analogously Corollary \ref{0103c} gives 
\begin{equation*} 
\lim_{R\to\infty}\int_{0}^{2\pi}R^{2}[w(R,\theta)^2-w_{\theta}(R,\theta)^2]\,d\theta= 8F,
\end{equation*}
so that in these results  
 we can use both polar coordinates $(R,\theta)$ or the coordinates $(R,\varphi)$ associated to the convex set.

\end{rem}

\section{A characterization  of convex sets of constant width by means of isotopic sets}
Given a compact convex set $K$ we denote by $C_{\alpha}$ the (isotopic) set of points in the plane from which $K$ is seen with angle $\alpha$. 
In view of the isoperimetric inequality we will say that the sets $C_{\alpha}$ tend to a circle, as $\alpha\to 0$, if $$\lim_{\alpha\to 0}\frac{L(\alpha)^{2}}{4\pi F_{\alpha}}=1,$$
where $L(\alpha)$ is the length  of $C_{\alpha}$ and $F(\alpha)$ is  the area enclosed by $C_{\alpha}$.

\begin{thm}\label{0102b}
Let $K$ be a compact convex set. Then the isotopic sets $C_{\alpha}$ of $K$ tend, as $\alpha\to 0$, to a circle if and only if $K$ is of constant width.
\end{thm}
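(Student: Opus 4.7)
The strategy is to study the rescaled isoptic curves $\alpha\,C_{\alpha}$ as $\alpha\to 0$, show that they converge in a $C^{1}$ sense to a fixed limit curve $\gamma$ depending only on the width function, and then apply the equality case of the isoperimetric inequality to $\gamma$. Since the ratio $L(\alpha)^{2}/(4\pi F(\alpha))$ is scale-invariant,
$$\lim_{\alpha\to 0}\frac{L(\alpha)^{2}}{4\pi F(\alpha)}=\frac{L_{\gamma}^{2}}{4\pi F_{\gamma}}\ge 1,$$
with equality iff $\gamma$ is a circle; the crux is then a $\pi$-periodicity argument that forces this circle to be centered at the origin, hence to have constant polar radius.

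Concretely, I would parametrize $C_{\alpha}$ by $\varphi\in[0,2\pi]$ via $P_{\alpha}(\varphi)=P(R(\varphi),\varphi)$, where $R(\varphi)$ is the unique $R$ with $w(R,\varphi)=\alpha$ (uniqueness comes from $w_{R}<0$, obtained by differentiating \eqref{2202c} in $R$). From Theorem~\ref{2202} and the identity $\alpha R(\varphi)=R(\varphi)w(R(\varphi),\varphi)$ one gets $\alpha R(\varphi)\to a(\varphi)$ uniformly. Differentiating \eqref{2202c} in $R$ yields the analogous uniform limit $R^{2}w_{R}\to -a$, and combined with the uniform limit $Rw_{\varphi}\to a'$ from the proof of Theorem~\ref{2202m} and the implicit relation $R'(\varphi)=-w_{\varphi}/w_{R}$ it upgrades to $\alpha R'(\varphi)\to a'(\varphi)$ uniformly. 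Using $\theta(\varphi)=\varphi+\arccos(p(\varphi)/R(\varphi))\to\varphi+\pi/2$ and $\theta'_{\varphi}\to 1$ uniformly (because $p/R$, $p'/R$, and $pR'/R^{2}$ are all $O(\alpha)$), the rescaled points $\alpha P_{\alpha}(\varphi)$ converge in $C^{1}$ to $\gamma(\varphi)=a(\varphi)(-\sin\varphi,\cos\varphi)$. The Cartesian formulas $|P_{\alpha}'|^{2}=R'^{2}+R^{2}\theta'^{2}$ and $F(\alpha)=\tfrac12\int_{0}^{2\pi}R^{2}\theta'_{\varphi}\,d\varphi$ then yield
$$\alpha L(\alpha)\to L_{\gamma}=\int_{0}^{2\pi}\sqrt{a^{2}+a'^{2}}\,d\varphi,\qquad \alpha^{2}F(\alpha)\to F_{\gamma}=\tfrac12\int_{0}^{2\pi}a^{2}\,d\varphi.$$

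The curve $\gamma$ is a $C^{2}$ Jordan curve, so $L_{\gamma}^{2}\ge 4\pi F_{\gamma}$ with equality iff $\gamma$ is a circle. The structural observation is that $a(\varphi)=p(\varphi)+p(\varphi+\pi)$ is automatically $\pi$-periodic, so in polar coordinates $\psi=\varphi+\pi/2$ the curve $\gamma$ has polar equation $r(\psi)=a(\psi-\pi/2)$, again $\pi$-periodic. Writing the polar equation of a generic circle of radius $\rho$ centered at $(c_{1},c_{2})$ (with the origin inside, which holds for small $\alpha$) as $r(\psi)=c_{1}\cos\psi+c_{2}\sin\psi+\sqrt{\rho^{2}-(c_{1}\sin\psi-c_{2}\cos\psi)^{2}}$, the first summand is $\pi$-antiperiodic while the second is $\pi$-periodic, so $\pi$-periodicity of $r$ forces $c_{1}=c_{2}=0$ and hence $r\equiv\rho$. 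Thus $\gamma$ is a circle iff $a$ is constant iff $K$ has constant width; the reverse implication is a one-line check, since $a'\equiv 0$ gives $L_{\gamma}=2\pi a$, $F_{\gamma}=\pi a^{2}$, and the ratio equals $1$.

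The main obstacle is the $C^{1}$ convergence step, in particular the uniform asymptotic $R^{2}w_{R}\to -a$ obtained by differentiating \eqref{2202c}; once this is in place, the remainder of the argument is a direct combination of the classical isoperimetric inequality with the $\pi$-periodicity of the width function.
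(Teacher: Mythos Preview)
Your argument is correct and lands on the same final identity as the paper, namely
\[
\lim_{\alpha\to 0}\frac{L(\alpha)^{2}}{4\pi F(\alpha)}=\frac{\Bigl(\int_{0}^{2\pi}\sqrt{a^{2}+a'^{2}}\,d\varphi\Bigr)^{2}}{2\pi\int_{0}^{2\pi}a^{2}\,d\varphi},
\]
followed by the equality case of the isoperimetric inequality for the polar curve $r=a(\varphi)$. The route, however, is genuinely different. The paper obtains the numerator by quoting an explicit parametrization of $C_{\alpha}$ from \cite{CGR} and computing $\lim_{\alpha\to 0}L(\alpha)\sin\alpha$ directly, and it obtains the denominator by quoting a Fourier-series formula for $\lim_{\alpha\to 0}F(\alpha)\sin^{2}\alpha$ (again from \cite{CGR}) and then showing, via a separate Parseval computation on $p(\varphi)$, that $4L^{2}+8\pi^{2}\sum_{k\ \mathrm{even}}c_{k}^{2}=2\pi\int_{0}^{2\pi}a^{2}\,d\varphi$. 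Your approach bypasses both external citations and the Fourier detour: you stay inside the $(R,\varphi)$-framework of Section~3, upgrade the pointwise asymptotics of $w$, $w_{\varphi}$ and the new one $R^{2}w_{R}\to -a$ to a $C^{1}$ convergence of the rescaled isoptic to $\gamma(\varphi)=a(\varphi)(-\sin\varphi,\cos\varphi)$, and read off $L_{\gamma}$ and $F_{\gamma}$ directly from the polar formulas. This is more self-contained and more geometric; the price is the extra work of checking the uniform derivative asymptotics, which you correctly identify as the main technical point.

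One further remark in your favour: the paper ends its converse by saying that equality in the isoperimetric inequality for $r=a(\varphi)$ ``implies that $a(\varphi)$ is constant'', without spelling out why the resulting circle must be centred at the origin. Your $\pi$-periodicity argument (the linear part $c_{1}\cos\psi+c_{2}\sin\psi$ of the polar equation of a circle is $\pi$-antiperiodic, hence must vanish) fills exactly this gap and is worth keeping.
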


\begin{proof} It is known, see for instance \cite{CGR}, that the points $(X,Y)$ in  $C_{\alpha}$ can be parametrized by $\varphi$  as follows
 \begin{eqnarray*}
 X&=&-\frac{1}{\sin \alpha}(p\sin(\varphi-\alpha)+p_{1}\sin\varphi)\\
  Y&=&\frac{1}{\sin \alpha}(p\cos(\varphi-\alpha)+p_{1}\cos\varphi)
 \end{eqnarray*}
with $p=p(\varphi)$ the support function of $K$,
 and $p_{1}=p(\varphi+\pi-\alpha)$.

Hence, the length $L(\alpha)$ of $C_{\alpha}$ is given by
$$L(\alpha)=\int_{0}^{2\pi}\sqrt{X'^{2}+Y'^{2}}\,d\varphi.$$

 A direct computation shows that
\begin{eqnarray}\label{1711}L(\alpha)=\frac{1}{\sin(\alpha)}\int_{0}^{2\pi}\sqrt{\Delta(\varphi,\alpha)}\,d\varphi\end{eqnarray}
with
$$\Delta(\varphi,\alpha)=p^{2}+p_{1}^{2}+p'^{2}+p_{1}'^{2}+ 2(pp_{1}+p'p_{1}')\cos(\alpha)+2(pp_{1}'-p'p_{1})\sin(\alpha).$$
So we have 
 \begin{eqnarray*}\lim_{\alpha\rightarrow 0}L(\alpha)\sin(\alpha)&=&\int_{0}^{2\pi}\sqrt{(p(\varphi)+p(\varphi+\pi))^{2}+(p'(\varphi)+p'(\varphi+\pi))^{2})}\,d\varphi\\&=&\int_{0}^{2\pi}\sqrt{a(\varphi)^{2}+a'(\varphi)^{2}}\,d\varphi,\end{eqnarray*}
where $a(\varphi)$ is the width of $K$ in the direction $\varphi$.

On the other hand, the area $F(\alpha)$ enclosed by $C_{\alpha}$ satisfies   (see \cite{CGR})

$$\lim_{\alpha\to 0}(F(\alpha)\sin^{2}\alpha)=\frac{L^{2}}{\pi}+2\pi\sum_{k\geq 2, even}c_{k}^{2}.$$

Thus
\begin{eqnarray*}\lim_{\alpha\to 0}\frac{L(\alpha)^{2}}{4\pi F(\alpha)}=\lim_{\alpha\to 0}\frac{L(\alpha)^{2}\sin^{2}(\alpha)}{4\pi F(\alpha)\sin^{2}(\alpha)}=\frac{[\int_{0}^{2\pi}\sqrt{a(\varphi)^{2}+a'(\varphi)^{2}}\,d\varphi]^{2}}{4L^{2}+8\pi^{2}\sum_{k\,even}c_{k}^{2}}.\end{eqnarray*}

If $K$ is a convex set of constant width $a$, $L=\pi a$ and  $c_{k}=0$ for $k$ even. Hence 
\begin{eqnarray*}\lim_{\alpha\to 0}\frac{L(\alpha)^{2}}{4\pi F(\alpha)}=1,\end{eqnarray*}
that proves one of the implications of the theorem.

Before looking at the converse, let us  check that the width $a(\varphi)$ satisfies
$$2\pi\int_{0}^{2\pi}a(\varphi)^{2}\,d\varphi=4L^{2}+8\pi^{2}\sum_{k\, even}c_{k}^{2}.$$
Indeed,
\begin{eqnarray*}
2\pi\int_{0}^{2\pi}a(\varphi)^{2}\,d\varphi&=&4\pi\int_{0}^{2\pi}p(\varphi)^{2}\,d\varphi+4\pi\int_{0}^{2\pi}p(\varphi)p(\varphi+\pi)\,d\varphi\\&=&
4\pi(2\pi a_{0}^{2}+\pi\sum_{k}c_{k}^{2})+4\pi(2\pi a_{0}^{2}+\pi\sum_{k}(-1)^{k} c_{k}^{2})
\\&=&4L^{2}+8\pi^{2}\sum_{k\, even}c_{k}^{2}.
\end{eqnarray*}

Hence $$\lim_{\alpha\to 0}\frac{L(\alpha)^{2}}{4\pi F(\alpha)}=\frac{[\int_{0}^{2\pi}\sqrt{a^{2}+a'^{2}}\,d\varphi]^{2}}{2\pi\int_{0}^{2\pi}a^{2}\, d\varphi}.$$

Now assuming that the above limit is equal to $1$ we have 
 \begin{eqnarray}\label{3011b}\left[\int_{0}^{2\pi}\sqrt{a^{2}+a'^{2}}\,d\varphi\right]^{2}=2\pi\int_{0}^{2\pi}a^{2}\,d\varphi.\end{eqnarray}

But this equality implies  that $a(\varphi)$ is constant. In fact \eqref{3011b} says that equality holds in the isoperimetric inequality applied to the curve given in polar coordinates by $r=a(\varphi)$.

\end{proof}

\section{Isotopic circles}
In this section we consider  the particular case in which the isotopic set  $C_{\alpha}$ of a compact convex set $K$ is a circle. 
We will say that $C_{\alpha}$ is an isotopic circle of $K$.

It is known that if a compact convex set $K$ has two concentric isotopic circles, then $K$ is a disc, see \cite{Nitsche}. The existence of only one isotopic circle is not enought to conclude that $K$ is a disc, for instance all the ellipses have an  isotopic circle with $\alpha=\pi/2$, see \cite{green}. 

In fact we can provide a family of compact convex sets having an isotopic circle with visual angle $\alpha=\pi/2$ and different from discs or ellipses. The examples given in \cite{green} do not have this property. 

To construct this family we remark that, from \eqref{2202c}, 
it follows that $K$ has an isotopic circle $C_{\pi/2}$ of radius $R$ if
$$p(\varphi)^{2}+p(\varphi+\pi/2)=R^{2}.$$

If we write the Fourier series for the function $p(\varphi)^{2}$ as 
$$p(\varphi)^{2}=\sum_{-\infty}^{\infty}c_{k}e^{ik\varphi},$$
it follows 
 $$p(\varphi)^{2}+p(\varphi+\pi/2)^{2}=\sum_{-\infty}^{\infty}c_{n}e^{in\varphi}(1+e^{in\pi/2}),$$
so that this quantity is constant  if and only if   
 $$c_{k}=0, \quad  k\neq 2+4m,\quad  m \mbox{  integer.}$$
 
 Then, any positive $2\pi$-periodic function   $p(\varphi)$ with $p+p''>0$, such that the Fourier series of $p(\varphi)^{2}$ has only coefficients $c_{k}$ with $k$ congruent to  $2$ module $4$, 
will give rise to a convex set seen from angle $\pi/2$ from a circle. 
For instance take 

$$p(\varphi)=\sqrt{15+9\cos^{2}(\varphi)+4\sin^{2}(\varphi)+\cos(6\varphi)}.$$

%

\subsection{Convex sets  of constant width with an isotopic circle}
For compact convex sets of constant  width the above quoted result  in \cite{Nitsche}, about convex sets with two isotopic circles,  can be improved. Concretely we have 
\begin{thm}\label{0403}
If a compact convex set $K$ of constant width has an isotopic circle, then  $K$ is a disc.
\end{thm}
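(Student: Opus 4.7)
\medskip

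\noindent\textbf{Plan of proof.}
The plan is to convert the isotopic-circle assumption into a pointwise functional equation for the support function $p(\varphi)$ of $K$, and then to exploit the $\pi$-shift symmetry $p(\varphi+\pi)=a-p(\varphi)$ coming from constant width $a$ to collapse $p$ to a constant.

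First I would translate $K$ so that the isotopic circle $C_{\alpha}$ is centred at the origin; this translation only adds a linear term to the support function, and so preserves both the $\mathcal{C}^{2}$ smoothness and the constant-width property. Using the parametrization $(X(\varphi),Y(\varphi))$ of $C_{\alpha}$ recalled in the proof of Theorem~\ref{0102b}, a short computation gives $X^{2}+Y^{2}=(p^{2}+p_{1}^{2}+2pp_{1}\cos\alpha)/\sin^{2}\alpha$, with $p=p(\varphi)$ and $p_{1}=p(\varphi+\pi-\alpha)$. The assumption that $C_{\alpha}$ is the circle of radius $R$ centred at the origin therefore reads
\begin{equation}
p^{2}+p_{1}^{2}+2pp_{1}\cos\alpha=R^{2}\sin^{2}\alpha \tag{$\ast$}
\end{equation}
for every $\varphi$.

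The decisive step is to evaluate $(\ast)$ at $\varphi+\pi$. By constant width $p(\varphi+\pi)=a-p$ and $p(\varphi-\alpha)=a-p_{1}$, so the shifted identity is just $(\ast)$ with $p$ replaced by $a-p$ and $p_{1}$ by $a-p_{1}$. Subtracting the two relations the quadratic terms cancel and one is left with $2a(1+\cos\alpha)\bigl[a-(p+p_{1})\bigr]=0$; since $\alpha\in(0,\pi)$, this forces $p_{1}=a-p$. Substituting back into $(\ast)$ yields a pointwise polynomial identity in $p$ alone which, after a short manipulation, reduces to
\begin{equation*}
2(1-\cos\alpha)\bigl(p(\varphi)-\tfrac{a}{2}\bigr)^{2}=R^{2}\sin^{2}\alpha-\tfrac{a^{2}}{2}(1+\cos\alpha).
\end{equation*}
The right-hand side is a constant, so $(p-a/2)^{2}$ is constant; by continuity $p-a/2$ is itself constant, and the constant-width relation then forces this constant to be zero. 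Hence $p\equiv a/2$ and $K$ is a disc.

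I expect the only point requiring genuine insight to be spotting the $\pi$-shift trick; once one realises that evaluating $(\ast)$ at $\varphi$ and at $\varphi+\pi$ exploits the constant-width identity in the strongest possible way, the rest of the argument is routine algebra. As a sanity check, the simplification also forces $R=a/(2\sin(\alpha/2))$, which is exactly the radius of the isotopic circle of a disc of diameter $a$, so the conclusion is internally consistent.
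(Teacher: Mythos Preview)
Your proof is correct and, in fact, a bit more streamlined than the paper's. Both arguments start from the identity $(\ast)$ and exploit the $\pi$-shift together with the constant-width relation, but the routes diverge after that. The paper evaluates the shifted equation, then invokes Green's result that the support function of a convex set with an isotopic circle is $2\alpha$-periodic; combining the $\pi$-shift with a further $\alpha$-shift and subtracting, it deduces that $p$ is $\alpha$-periodic, which turns $(\ast)$ into an equation in $p(\varphi)$ and $p(\varphi+\pi)$ alone, and the quadratic conclusion follows from constant width.

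You bypass the appeal to Green entirely: subtracting $(\ast)$ and its $\pi$-shifted version directly gives $p+p_{1}=a$, which is already enough to reduce $(\ast)$ to a quadratic in $p(\varphi)$. This makes your argument self-contained and slightly shorter. (Incidentally, your relation $p(\varphi)+p(\varphi+\pi-\alpha)=a$ combined with constant width immediately yields the $\alpha$-periodicity the paper works to obtain, so the two proofs are close cousins; you simply avoid the detour through the external $2\alpha$-periodicity lemma.) The translation step at the start is harmless, as you note: it shifts $p$ by $c_{1}\cos\varphi+c_{2}\sin\varphi$, which preserves constant width, and it is what is needed to make $X^{2}+Y^{2}$ constant equivalent to $(\ast)$.
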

\begin{proof}Assume that $K$ has an isotopic circle of radius $R$ with visual angle $\alpha$. Then 
equation \eqref{2202c} applied with angle $\varphi+\pi$ instead of $\varphi$ gives
$$p(\varphi+\pi)^{2}+p(\varphi-\alpha)^{2}+2p(\varphi+\pi)p(\varphi-\alpha)\cos(\alpha)=C,$$
with $C$ some constant.
By the condition of constant width, 
$p(\varphi)+p(\varphi+\pi)=a$, one has $$(a-p(\varphi))^{2}+p(\varphi-\alpha)^{2}+2(a-p(\varphi))p(\varphi-\alpha)\cos(\alpha)=C.$$

Changing the constant one  can write 
$$p(\varphi)^{2}-2ap(\varphi)+p(\varphi-\alpha)^{2}+2ap(\varphi-\alpha)\cos(\alpha)-2p(\varphi)p(\varphi-\alpha)\cos(\alpha)=C.$$
Replacing $\varphi$ by $\varphi-\alpha$ and taking into account that  $p(\varphi)$ is $2\alpha$-periodic (see \cite{green}) it follows  
$$p(\varphi-\alpha)^{2}-2ap(\varphi-\alpha)+p(\varphi)^{2}+2ap(\varphi)\cos(\alpha)-2p(\varphi-\alpha)p(\varphi)\cos(\alpha)=C.$$

Substracting the last two equalities  it follows
$$2a\bigg(p(\varphi-\alpha)-p(\varphi)+(p(\varphi-\alpha)-p(\varphi))\cos(\alpha)\bigg)=0$$  
and so 
$$p(\varphi)=p(\varphi-\alpha),$$
that is, $p(\varphi)$ 
is $\alpha$-periodic.

Then equation \eqref{2202c} reads 

$$p(\varphi)^{2}+p(\varphi+\pi)^{2}+2p(\varphi)p(\varphi+\pi)\cos(\alpha)=C,$$
that together with $p(\varphi)+p(\varphi+\pi)=a$ gives 

$$p(\varphi)^{2}+p(\varphi)^{2}-2ap(\varphi)+2ap(\varphi)\cos(\alpha)-2p(\varphi)p(\varphi)\cos(\alpha)=C,$$
or
$$(2p(\varphi)^{2}-2ap(\varphi))(1-\cos(\alpha))=C.$$
In conclusion  $p(\varphi)$ is,  for $0\leq\varphi\leq 2\pi$, a solution  of a second degree equation  $x^{2}+mx+n=0$, with  $m,n \in\R$
and hence it is constant and   $K$ is a  disc. \end{proof}

In fact this result can be thought as a consequence of Nitsche's result that assumes the existence of two isotopic circles, because in the case of constant width one of the isotopic circles is given at the infinity by Theorem \ref{0102b}.

\subsection{Relationship between the area of a convex set and the radius of an isotopic circle}
We compare the area of the convex set $K$ with the area enclosed by an isotopic circle of $K$.

\begin{thm}\label{0403b}Let $K$ be a compact convex set of area $F$ that has an isotopic circle $C_{\alpha}$ of radius $R$. Then 
$$F\leq F_{R} \sin^{2}(\frac{\alpha}{2}),$$
with $F_{R}=\pi R^{2}$.
\end{thm}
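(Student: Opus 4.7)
The plan is to place the origin at the center of the isotopic circle $C_\alpha$ and expand the support function of $K$ in a Fourier series $p(\varphi)=a_0+\sum_{k\geq 1}(a_k\cos k\varphi+b_k\sin k\varphi)$, writing $c_k^2=a_k^2+b_k^2$. In these coordinates, $L=2\pi a_0$ and a standard Parseval computation of $\frac{1}{2}\int_0^{2\pi}(p^2-p'^2)\,d\varphi$ gives $F=\pi a_0^2-\frac{\pi}{2}\sum_{k\geq 2}(k^2-1)c_k^2$. So bounding $F$ above by $\pi R^2\sin^2(\alpha/2)$ amounts to extracting a suitable upper bound on $a_0^2$ from the hypothesis on $C_\alpha$.

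The hypothesis, via \eqref{2202c} with the constant angle $w=\alpha$, reads
$p(\varphi)^2+p(\varphi+\pi-\alpha)^2+2p(\varphi)p(\varphi+\pi-\alpha)\cos\alpha = R^2\sin^2\alpha$
identically in $\varphi$. I would integrate this over $[0,2\pi]$. The only delicate point is the Parseval computation of $\int_0^{2\pi}p(\varphi)p(\varphi+\pi-\alpha)\,d\varphi$: using the addition formulas, the mixed products $a_kb_k$ cancel and one obtains $2\pi a_0^2+\pi\sum_{k\geq 1}c_k^2\cos k(\pi-\alpha)$. Combining with $\int p^2=2\pi a_0^2+\pi\sum c_k^2$ yields the single scalar identity
$2\pi a_0^2(1+\cos\alpha)+\pi\sum_{k\geq 1}c_k^2\bigl(1+\cos\alpha\cos k(\pi-\alpha)\bigr) = \pi R^2\sin^2\alpha$.

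Using $1+\cos\alpha=2\cos^2(\alpha/2)$ and $\sin^2\alpha=4\sin^2(\alpha/2)\cos^2(\alpha/2)$, I would divide by $2(1+\cos\alpha)$, solve for $\pi a_0^2$, and substitute into the area formula to get
$$F = \pi R^2\sin^2(\alpha/2) - \frac{\pi}{4\cos^2(\alpha/2)}\sum_{k\geq 1}c_k^2\bigl(1+\cos\alpha\cos k(\pi-\alpha)\bigr) - \frac{\pi}{2}\sum_{k\geq 2}(k^2-1)c_k^2.$$
Both corrections are manifestly nonnegative: since $\cos k(\pi-\alpha)=(-1)^k\cos k\alpha$ has modulus at most one, $1+\cos\alpha\cos k(\pi-\alpha)\in[0,2]$ (and for $k=1$ it equals $\sin^2\alpha$), while $k^2-1\geq 3$ for $k\geq 2$. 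Hence $F\leq \pi R^2\sin^2(\alpha/2)$.

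The step I expect to be the main nuisance is the Parseval bookkeeping—checking that the $a_kb_k$ mixed terms really cancel and that the algebra with $\cos^2(\alpha/2)$ leaves a genuinely nonnegative residual. Once this is done the inequality is transparent, and the argument also identifies the equality case: $F=\pi R^2\sin^2(\alpha/2)$ forces $c_k=0$ for every $k\geq 1$, so that $K$ must be a disc centered at the center of $C_\alpha$.
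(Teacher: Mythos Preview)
Your argument is correct and, in fact, more self-contained than the paper's. Your Parseval computation of $\int_0^{2\pi}p\,p_1\,d\varphi$ is exactly the identity \eqref{2602b} the paper records later; integrating \eqref{2202c} and dividing by $2(1+\cos\alpha)$ then gives precisely your formula for $\pi a_0^{2}$. Since $\alpha\in(0,\pi)$ forces $|\cos\alpha|<1$, each factor $1+\cos\alpha\cos k(\pi-\alpha)$ is strictly positive, so both correction sums are nonnegative and the inequality follows; this even pins down the equality case as $c_k=0$ for all $k\ge 1$, i.e.\ $K$ a disc centered at the center of $C_\alpha$.

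The paper takes a different route: it quotes from \cite{CGR} the identity \eqref{2602} for $F(\alpha)\sin^{2}(\alpha/2)$ involving the Hurwitz functions $g_k(\alpha)$, and then invokes Green's theorem \cite{green} to restrict to $\alpha=\pi-(m/n)\pi$ with $m$ odd and to force $p$ to be $(2\pi/n)$-periodic, so that only the coefficients $c_{n\mu}$ survive and $g_{n\mu}(\alpha)$ simplifies to $1+(-1)^{\mu}\cos\alpha$. Your approach bypasses both external ingredients: you only use \eqref{2202c}, the standard area formula $2F=\int(p^{2}-p'^{2})\,d\varphi$, and the trivial bound $|\cos\alpha\cos k(\pi-\alpha)|\le 1$. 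The paper's route, on the other hand, yields the sharper exact identity \eqref{0403f} in terms of the surviving Fourier coefficients, which is of independent interest.
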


\begin{proof}
In \cite{CGR} it is proved  the equality
\begin{eqnarray}\label{2602}F(\alpha)\sin^{2}(\frac{\alpha}{2})=F+\frac{\pi}{4\cos^{2}(\frac{\alpha}{2})}\sum_{k\geq 2}\big(2(k^{2}+1)\cos^{2}(\frac{\alpha}{2})+g_{k}(\alpha)\big)c_{k}^{2}\end{eqnarray}
expressing  the area $F(\alpha)$ enclosed by the isotopic set $C_{\alpha}$ of a compact convex set $K$ in terms of the area $F$ of $K$, the Fourier coefficients $a_{k},b_{k}$ of the support function of $K$ ($c_{k}^{2}=a_{k}^{2}+b_{k}^{2}$), and  Hurwitz's functions $g_{k}(\alpha)$  given by
$$g_{k}(\alpha)=1+\frac{(-1)^{k}}{2}((k+1)\cos(k-1)\alpha-(k-1)\cos(k+1)\alpha).$$

It is known from \cite{green} that when $C_{\alpha}$ is a circle and $K$ is not a  disc then  $\alpha=\pi-\frac{m}{n}\pi$ with $(m,n)=1$, and $m$ odd. We will assume from now on that $K$ is not a disk.

In this case  $p(\varphi)$ is $(2\pi/n)$-periodic and so $k=\mu n$, $\mu\in\N$,  and  $g_{k}(\alpha)=1+(-1)^{\mu}\cos(\alpha)$.

So, equality \eqref{2602} says
\begin{eqnarray}\label{0403f}
&&F_{R}\sin^{2}(\frac{\alpha}{2})=\\&&F+\frac{\pi}{4\cos^{2}\frac{\alpha}{2}}\sum_{\mu}(2(\mu^{2}n^{2}-1)\cos^{2}(\frac{\alpha}{2})+1+(-1)^{\mu}\cos(\alpha))c_{n\mu}^{2}\nonumber
\end{eqnarray}
and since the coefficient 
of $c_{n\mu}^{2}$ is positive for each $\mu$, we obtain the desired inequality.

\end{proof}

\subsection{Relationship between the perimeter of a convex set and the radius of an isotopic circle}
Now we compare the perimeter of a convex set with the length of one of its isotopic circles.
\begin{thm}\label{0403c}Let $K$ be a compact convex set of perimeter  $L$ that has an isotopic circle $C_{\alpha}$ of radius $R$. Then 
$$L\leq L_{R}\sin (\frac{\alpha}{2}).$$
with $L_{R}=2\pi R$.
\end{thm}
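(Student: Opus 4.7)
The plan is to exploit the explicit length formula \eqref{1711}, namely $L(\alpha)\sin\alpha = \int_{0}^{2\pi}\sqrt{\Delta(\varphi,\alpha)}\,d\varphi$, and to find a sharp pointwise lower bound for $\sqrt{\Delta}$ that integrates to something comparable to $L=\int p\,d\varphi$.

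The key algebraic observation I would establish first is that $\Delta$ admits the sum-of-squares decomposition
\begin{equation*}
\Delta = (p+p_{1}\cos\alpha+p_{1}'\sin\alpha)^{2}+(p'-p_{1}\sin\alpha+p_{1}'\cos\alpha)^{2},
\end{equation*}
checked by direct expansion against the expression for $\Delta$ given after \eqref{1711}. Then I would apply the elementary inequality $\sqrt{A^{2}+B^{2}}\geq uA+vB$, valid for any pair $(u,v)$ with $u^{2}+v^{2}=1$, with the specific choice $u=\cos(\alpha/2)$, $v=-\sin(\alpha/2)$. Using $\cos(\alpha/2)\cos\alpha+\sin(\alpha/2)\sin\alpha=\cos(\alpha/2)$ and $\cos(\alpha/2)\sin\alpha-\sin(\alpha/2)\cos\alpha=\sin(\alpha/2)$, the right-hand side collapses to
\begin{equation*}
\sqrt{\Delta(\varphi,\alpha)}\;\geq\;\cos(\alpha/2)\bigl(p(\varphi)+p_{1}(\varphi)\bigr)+\sin(\alpha/2)\bigl(p_{1}'(\varphi)-p'(\varphi)\bigr).
\end{equation*}

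Integrating over $\varphi\in[0,2\pi]$ and using that $p_{1}(\varphi)=p(\varphi+\pi-\alpha)$ is a $2\pi$-periodic translate of $p$ (so $\int p_{1}\,d\varphi=\int p\,d\varphi=L$), together with $\int p'\,d\varphi=\int p_{1}'\,d\varphi=0$, yields
\begin{equation*}
L(\alpha)\sin\alpha=\int_{0}^{2\pi}\sqrt{\Delta}\,d\varphi\;\geq\;2L\cos(\alpha/2).
\end{equation*}
Dividing by $\sin\alpha=2\sin(\alpha/2)\cos(\alpha/2)$ gives the clean inequality $L\leq L(\alpha)\sin(\alpha/2)$. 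Since by hypothesis $C_{\alpha}$ is a circle of radius $R$, one has $L(\alpha)=2\pi R=L_{R}$, and the bound $L\leq L_{R}\sin(\alpha/2)$ follows.

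The main obstacle is guessing the correct unit vector $(u,v)$. The motivation is that among the linear combinations $uA+vB$, one wants the coefficients of $p$ and $p_{1}$ (after the trigonometric simplification) to be equal and nonnegative, while the derivative terms can harmlessly have any coefficient since they integrate to zero. This leads uniquely to $(u,v)=(\cos(\alpha/2),-\sin(\alpha/2))$, and one can check that for $K$ a disc the inequality $\sqrt{\Delta}\geq uA+vB$ becomes an equality, confirming that the bound is sharp. Note that this argument in fact establishes $L\leq L(\alpha)\sin(\alpha/2)$ for \emph{any} compact convex $K$, without needing $C_{\alpha}$ to actually be a circle or invoking Green's periodicity.
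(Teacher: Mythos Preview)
Your proof is correct and takes a genuinely different route from the paper. The paper argues via Fourier analysis: it expands $\int_{0}^{2\pi}pp_{1}\,d\varphi$ in terms of the Fourier coefficients of $p$, integrates the fundamental relation \eqref{2202c} over $[0,2\pi]$, and then invokes Green's result that $\alpha=\pi-\tfrac{m}{n}\pi$ with $m$ odd and $p$ is $(2\pi/n)$-periodic to simplify the resulting sum to the explicit identity \eqref{0403e}, from which the inequality is read off as the positivity of a sum of squares $c_{n\mu}^{2}$. Your argument instead stays at the level of the length formula \eqref{1711}: the sum-of-squares decomposition of $\Delta$ (which checks out) combined with the Cauchy--Schwarz-type bound $\sqrt{A^{2}+B^{2}}\ge uA+vB$ and the well-chosen unit vector $(\cos(\alpha/2),-\sin(\alpha/2))$ yields a pointwise lower bound on $\sqrt{\Delta}$ that integrates directly to $2L\cos(\alpha/2)$.

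What each approach buys: the paper's Fourier computation delivers more, namely the exact identity \eqref{0403e}, which in particular identifies the equality case (all $c_{n\mu}=0$, i.e.\ $K$ a disc) and quantifies the defect. Your argument is considerably more elementary---no Fourier series, no appeal to Green's periodicity---and, as you observe, actually proves the stronger statement $L\le L(\alpha)\sin(\alpha/2)$ for \emph{every} compact convex $K$ and every $\alpha$, with $L(\alpha)$ the length of the isoptic curve $C_{\alpha}$ whether or not it is a circle. That is a nice strengthening the paper does not state.
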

\begin{proof}
Let us consider
the Fourier series  of the functions  $p=p(\varphi)$ and $p_{1}=p(\varphi+\pi-\alpha)$ given by 
\begin{eqnarray*}
p&=&a_{0}+\sum_{k\geq 1}a_{k} \cos(k\varphi)+b_{k}\sin(k\varphi)\\
p_{1}&=&a_{0}+\sum_{k\geq 1}A_{k} \cos(k\varphi)+B_{k}\sin(k\varphi)
\end{eqnarray*}
with
\begin{eqnarray*}
A_{k}&=&(-1)^{k+1}(-a_{k}\cos(k\alpha)+b_{k}\sin(k\alpha))\\
B_{k}&=&(-1)^{k+1}(-a_{k}\sin(k\alpha)-b_{k}\cos(k\alpha)).
\end{eqnarray*}
Then
$$\int_{0}^{2\pi}pp_{1}\,d\varphi=\int_{0}^{2\pi} \bigg(a_{0}^{2}+\sum_{k\geq 1} (a_{k}A_{k}\cos^{2}(k\varphi)+b_{k}B_{k}\sin^{2}(k\varphi))\bigg)\,d\varphi,$$
and substituting the given values of $A_{k},B_{k}$, it follows 

\begin{eqnarray}\label{2602b}\int_{0}^{2\pi}pp_{1}\,d\varphi=\frac{L^{2}}{2\pi}-\pi \sum_{k\geq 1} (-1)^{k+1}c_{k}^{2}\cos(k\alpha).\end{eqnarray}

Assuming that $C_{\alpha}$ is an isotopic circle of radius $R$, 
integrating the equality  \eqref{2202c} on this circle and taking into account  \eqref{2602b}, 
we obtain 
\begin{eqnarray*}
2L^{2}(1+\cos(\alpha))+4\pi^{2}\sum_{k\geq 1}(1+(-1)^{k}\cos(\alpha)\cos(k\alpha))c_{k}^{2}=L_{R}^{2}\sin^{2}(\alpha),
\end{eqnarray*}
where $L_{R}=2\pi R$. 

Considering, as in the previous section, that  $\alpha=\pi-\frac{m}{n}\pi$ with $(m,n)=1$ and $m$ odd, the above equation reads 

\begin{eqnarray}\label{0403e}L^{2}+2\pi^{2}\sum_{\mu, even}c_{n\mu}^{2}+2\pi^{2}\tan^{2}(\frac{\alpha}{2})\sum_{\mu, odd}c_{n\mu}^{2}=L_{R}^{2}\sin^{2}(\frac{\alpha}{2}).\end{eqnarray}

In particular we have the inequality 
$$L\leq L_{R}\sin (\frac{\alpha}{2}).$$

\end{proof}

{\bf Acknowledgement.}  The authors are grateful to A. Gasull for various conversations on the subject that have contributed to the proof
of Theorem \ref{0102b}. We also thank  E. Gallego for his useful comments.

\bibliographystyle{amsplain}

\bibliography{bibliografia}

{\sc Departament de Matemàtiques, Universitat Autònoma de Barcelona, 08193 Bellaterra, Barcelona, Catalonia. }

{\em E-mail address:} joaquim.bruna@uab.cat, julia.cufi@uab.cat,  agusti.reventos@uab.cat.

\end{document}